\documentclass[12pt]{amsart}
\setcounter{tocdepth}{1}
\usepackage{amsmath}
\usepackage{amssymb}
\usepackage{epsfig}
\usepackage{wasysym}
\usepackage{graphicx}
\usepackage{tikz}
\usepackage{tikz-cd}
\usepackage{bm}
\usepackage{xcolor}
\numberwithin{equation}{section}
\usepackage{extpfeil}

\usepackage[all]{xy}

\input xy
\xyoption{all}

\DeclareFontFamily{U}{mathb}{\hyphenchar\font45}
\DeclareFontShape{U}{mathb}{m}{n}{
      <5> <6> <7> <8> <9> <10> gen * mathb
      <10.95> mathb10 <12> <14.4> <17.28> <20.74> <24.88> mathb12
      }{}
\DeclareSymbolFont{mathb}{U}{mathb}{m}{n}
\DeclareMathSymbol{\righttoleftarrow}{3}{mathb}{"FD}

\calclayout
\allowdisplaybreaks[3]

\theoremstyle{plain}
\newtheorem{prop}{Proposition}[section]

\newtheorem{theo}[prop]{Theorem}
\newtheorem{coro}[prop]{Corollary}

\newtheorem{lemm}[prop]{Lemma}

\theoremstyle{definition}

\newtheorem{prob}[prop]{Problem}

\newtheorem{conj}[prop]{Conjecture}

\def\cB{{\mathcal B}}
\def\cBC{{\mathcal{BC}}}
\def\BC{{\mathcal{BC}}}
\def\SC{{\mathcal{SC}}}

\def\cF{{\mathcal F}}

\def\cH{{\mathcal H}}

\def\cS{{\mathcal S}}

\def\fC{{\mathfrak C}}

\def\fK{{\mathfrak K}}

\def\fS{{\mathfrak S}}

\def\fS{{\mathfrak S}}

\def\bP{{\mathbb P}}
\def\bQ{{\mathbb Q}}

\def\bZ{{\mathbb Z}}
\def\bR{{\mathbb R}}

\def\bN{{\mathbb N}}
\def\bC{{\mathbb C}}

\def\rH{{\mathrm H}}

\def\bF{{\mathbb F}}

\def\res{\mathrm{res}}

\def\BC{\mathcal{BC}}

\def\GL{\mathrm{GL}}

\def\Hom{\mathrm{Hom}}

\def\Burn{\mathrm{Burn}}

\def\lim{\mathrm{lim}}

\def\cd{\mathrm{cd}}

\makeatother
\makeatletter

\begin{document}

\title[Combinatorial Burnside groups]{Combinatorial Burnside groups}

\author{Yuri Tschinkel}
\address{
  Courant Institute,
  251 Mercer Street,
  New York, NY 10012, USA
}

\email{tschinkel@cims.nyu.edu}

\address{Simons Foundation\\
160 Fifth Avenue\\
New York, NY 10010\\
USA}

\author{Kaiqi Yang}
\address{
  Courant Institute,
  251 Mercer Street,
  New York, NY 10012, USA
}
\email{ky994@nyu.edu}

\author{Zhijia Zhang}
\email{zhijia.zhang@cims.nyu.edu}

\date{\today}

\begin{abstract}
We study the structure of combinatorial Burnside groups, which receive equivariant birational invariants of actions of finite groups on algebraic varieties. 
\end{abstract}

\maketitle

\section{Introduction}
\label{sect:intro}

Let $G$ be a finite group, acting regularly on a smooth projective variety over an algebraically closed field $k$, of characteristic zero. The study of such actions, up to $G$-equivariant birationality, is a classical and active area in higher-dimensional algebraic geometry (see, e.g., \cite{serre}, \cite{CS}, \cite{Pro-ICM}). A new type of birational invariants of $G$-actions was introduced in \cite{BnG}. 
These take values in the {\em Burnside group} $$
\Burn_n(G),
$$ 
defined by explicit generators and relations. The invariant is computed on an appropriate birational model $X$ (standard form), where 
\begin{itemize}
\item all stabilizers are abelian,
\item a translate of an irreducible component $Y$ of a locus with nontrivial stabilizer is either equal to $Y$ or is disjoint from it. 
\end{itemize}
The invariant takes into account information about
\begin{itemize}
\item subvarieties $Y\subset X$ with nontrivial (abelian) stabilizers $H$,
\item the induced action of the centralizer $Z_G(H)$ of $H$ on $Y$, and 
\item the representation of $H$ in the normal bundle to $Y$. 
\end{itemize}
A purely combinatorial version of these constructions was introduced in \cite{KT-struct}. It keeps track of the {\em group-theoretic} information extracted as above, while forgetting the {\em field-theoretic} information, i.e., the birational type of the action on irreducible components of loci with nontrivial stabilizers. 

Formally, combinatorial birational invariants of $G$-actions on 
algebraic varieties of dimension $n$ take values in the 
{\em combinatorial Burnside group} 
$$
\cBC_n(G),  
$$
defined in Section~\ref{sect:gen}. 

When $G$ is abelian, there is 
a surjective homomorphism 
$$
\cBC_n(G)\to \cB_n(G),
$$
a group introduced in \cite{KPT} (and in Section~\ref{sect:symb} below), 
which in turn has remarkable arithmetic properties \cite{KPT}, \cite{KT-arith}. For example, 
$$
\cB_n(G)\otimes \bQ = \rH_0(\Gamma(n,G), \cF_n)\otimes \bQ, 
$$
where $\Gamma(n,G)\subset \GL_n(\bZ)$ is 
a certain congruence subgroup  and 
$\cF_n$ is the $\bQ$-vector space generated by characteristic functions of convex rational polyhedral cones in $\bR^n$, modulo
functions of support less than $n$ \cite[Section 9]{KPT}. In particular, the groups $\cB_n(G)$ carry Hecke operators. For $n=2$, there is  a relation between $\cB_2(G)$ and Manin symbols. 

In this note, we investigate arithmetic properties of the {\em a priori} richer groups $\cBC_n(G)$, 
which may be viewed as analogs of Manin symbols for nonabelian groups $G$, and of the {\em finitely generated} ring
$$
\BC_*(G) = \bigoplus_{n\ge 0} \BC_n(G).
$$

Our main result, Theorem~\ref{thm:main}, is the construction of an isomorphism
\begin{equation}
\label{eqn:main}
\BC_n(G) \simeq \bigoplus_{[H,Y]} \cB_n([H,Y]), 
\end{equation}
where the sum is over $G$-conjugacy classes $[H,Y]$ of pairs $(H,Y)$,  with $H\subseteq G$ an abelian subgroup and $H\subseteq Y\subseteq Z_G(H)$, and
$$
\cB_n([H,Y])\simeq \cB_n(H)/(\mathbf{C}_{(H,Y)})
$$
is the quotient by a conjugation relation  
which depends on the representative $(H,Y)$ of the conjugacy class of the pair (see Section~\ref{sect:gen}). For $G$ {\em abelian}, 
we have
$$
\cB_n([H,Y]) = \cB_n(H), \, \text{ and } \,  \BC_n(G) = \bigoplus_{H' \subseteq G} \bigoplus_{H''\subseteq H'} \cB_n(H'');
$$
in particular, the groups $\BC_n(G)$ also carry Hecke operators,  as defined in \cite[Section 6]{KPT} and \cite[Section 3]{KT-arith}.

\

\noindent
{\bf Acknowledgments:} 
We are very grateful to A. Kresch for his interest and comments. 
The first author was partially supported by NSF grant 2000099.

\section{Moebius inversion}
\label{sect:moebius}
Let $G$ be a finite group and $\mathcal H$ the poset of abelian subgroups of $G$ under the inclusion relation. Let $\mathcal S$ be the $\bZ$-module, 
freely generated by $\cH$; we will view $\cH$ as a subset of $\cS$. For $H\in \cH$ we let $(H)$ be its image in $\cS$:
$$
\cS = \bigoplus_{H\in \cH} \bZ (H). 
$$
Let 
$\Psi$ be the $\cS$-valued function on $\cS$, defined on generators by 
$$
\Psi((H))=\sum_{H'\subseteq H} (H'), \quad \forall H\in \cH, 
$$
and extended to all of $\cS$ by $\bZ$-linearity. 
Then there exists a unique $\bZ$-valued function, the {\em Moebius function} 
\begin{equation}
\label{eqn:moe}
\mu=\mu_{\cH} : \cH\times \cH\to \bZ
\end{equation}
such that 
$$
\Phi((H)):=\sum_{H'\subseteq H} \mu(H',H) (H'), \quad \forall H\in \cH, 
$$
is the inverse of $\Psi$, i.e., 
$$
\Psi\circ \Phi = \Phi\circ\Psi =\mathrm{Id}. 
$$

The Moebius function $\mu$ is constructed recursively by rules
\begin{itemize}
    \item $\mu(H,H)=1$, for all $H\in\cH$,
    \item $\mu(H',H)=0$, for all $H',H\in\cH$ with $H'\not\subseteq H$,
    \item 
    $$
    \mu(H'',H)=\displaystyle{-\sum_{H''\subseteq H'\subsetneq H}}\mu(H'', H'),
    $$ 
    for all $H'',H\in\cH$ with $H''\subsetneq H$.
\end{itemize}

When $G$ is {\em abelian} the poset $\cH$ contains all subgroups of $G$ and
is a {\em lattice}, 
with {\em join} and {\em meet} operations defined by 
\begin{align*}
H'\wedge H&:= H'\cap H,\\
\quad H'\vee H&:=\text{subgroup generated by } H' \text{ and } H.
\end{align*}

In Section \ref{sect:struct}, 
we will use the following result concerning the Moebius function on lattices (see, e.g., \cite{Weisner}, or \cite[Sect 5]{Rota}): 

\begin{lemm}
\label{lemm:weisner}
Let $G$ be a finite abelian group and $H'', H'\subseteq G$ subgroups satisfying 
$$
H''\subseteq H'\subsetneq G.
$$
Let $\mu$ be the Moebius function of the subgroup lattice of $G$.
Then 
$$
\sum_{H\subseteq G, \, H\cap H'=H'' }\mu(H,G)=0.
$$ 
\end{lemm}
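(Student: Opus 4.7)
The plan is to apply Moebius inversion twice: once on the subgroup lattice of $H'$, and once (implicitly, via the defining identity $\zeta\ast\mu=\mathrm{Id}$) on that of $G$. First, I would introduce two auxiliary functions on subgroups $K\subseteq H'$,
$$
\alpha(K) := \sum_{H\subseteq G,\, H\cap H'=K} \mu(H,G), \qquad \beta(K) := \sum_{H\subseteq G,\, H\cap H'\supseteq K} \mu(H,G),
$$
so that the lemma amounts to proving $\alpha(H'')=0$. Partitioning the sum defining $\beta(K)$ according to the value of $H\cap H'$ yields $\beta(K)=\sum_{K\subseteq K'\subseteq H'} \alpha(K')$, and Moebius inversion on the subgroup lattice of $H'$ will invert this to
$$
\alpha(H'') = \sum_{H''\subseteq K'\subseteq H'} \mu_{H'}(H'',K')\,\beta(K'),
$$
where $\mu_{H'}$ denotes the Moebius function of that sublattice. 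It will therefore suffice to prove that $\beta(K')=0$ for each $K'$ in this sum.

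The key observation I would exploit is that for $K'\subseteq H'$ the condition $H\cap H'\supseteq K'$ collapses to $H\supseteq K'$, since $K'\subseteq H'$ is then automatic. Consequently
$$
\beta(K') = \sum_{K'\subseteq H\subseteq G}\mu(H,G) = \delta_{K',G}
$$
by the identity $\zeta\ast\mu=\mathrm{Id}$ in the incidence algebra of the subgroup lattice of $G$, equivalent (by finite-dimensionality) to the one-sided recurrence for $\mu$ recorded in the paper. Since $K'\subseteq H'\subsetneq G$ forces $K'\neq G$, every term in the sum vanishes, and hence $\alpha(H'')=0$.

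The argument should have no real obstacle; the one point to keep track of is the decoupling $H\cap H'\supseteq K' \Leftrightarrow H\supseteq K'$, valid precisely because $K'\subseteq H'$. This is exactly what turns a meet-constrained sum over the subgroup lattice of $G$ into a plain sum over an interval and allows immediate collapse via $\zeta\ast\mu=\mathrm{Id}$. Abelianness of $G$ enters only to guarantee that the subgroup lattice coincides with the poset of abelian subgroups, so that all intersections invoked above remain in the lattice under consideration.
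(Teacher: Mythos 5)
Your argument is correct. Note that the paper does not prove this lemma at all --- it cites it as a known result (Weisner's theorem, see also Rota), so there is no in-paper proof to compare against; what you have written is essentially the standard self-contained proof of the dual form of Weisner's theorem. Each step checks out: the partition $\beta(K)=\sum_{K\subseteq K'\subseteq H'}\alpha(K')$ is valid because $H\cap H'$ is always a subgroup of $H'$ containing $H''$ whenever it contains $K$; the dual M\"obius inversion on the finite poset of subgroups of $H'$ is legitimate; the decoupling $H\cap H'\supseteq K'\Leftrightarrow H\supseteq K'$ for $K'\subseteq H'$ is the right pivot; and you correctly flag that the paper's recursive definition of $\mu$ encodes $\mu\ast\zeta=\mathrm{Id}$ while your collapse of $\beta(K')$ uses $\zeta\ast\mu=\mathrm{Id}$, which follows since one-sided inverses of $\zeta$ in the incidence algebra of a finite poset coincide. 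One small remark: abelianness of $G$ is not actually needed for your argument (only finiteness and closure of the subgroup poset under intersection); it matters for the paper only because elsewhere the relevant poset is that of abelian subgroups, which equals the full subgroup lattice exactly when $G$ is abelian.
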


In Section~\ref{sect:exam}, we will also need a corollary of Lemma~\ref{lemm:weisner}:

\begin{coro}
\label{coro:moebius2}
Let $H,H'$ be subgroups of a fixed finite abelian group, and 
$$
H''\subseteq H\cap H'. 
$$
Then
$$
\sum_{\substack{\widetilde H\subseteq H,  \widetilde H'\subseteq H', \,\widetilde H\cap \widetilde H'= H''}} \mu(\widetilde H,H)\mu(\widetilde H', H')=
\begin{cases}
\mu(H'', H)&\text{if } H=H',\\
0&\text{otherwise}.
\end{cases}
$$
\end{coro}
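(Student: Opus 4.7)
The plan is to evaluate the double sum iteratively: first fix $\widetilde H' \subseteq H'$ and sum over $\widetilde H \subseteq H$. The key reformulation is that since $\widetilde H \subseteq H$, one has
$$
\widetilde H \cap \widetilde H' \;=\; \widetilde H \cap (\widetilde H' \cap H),
$$
so the intersection condition becomes $\widetilde H \cap J = H''$, where $J := \widetilde H' \cap H$ is a subgroup of $H$ containing $H''$ (the containment being automatic provided $H'' \subseteq \widetilde H'$, which is necessary for the term to be nonzero in any case).

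With this rewrite, I would apply Lemma~\ref{lemm:weisner} inside the subgroup lattice of $H$, taking $H$ in the role of ``$G$'' and $J$ in the role of ``$H'$''. Provided $J \subsetneq H$, Weisner's lemma gives
$$
\sum_{\widetilde H \subseteq H,\; \widetilde H \cap J = H''} \mu(\widetilde H, H) \;=\; 0.
$$
The inner sum therefore contributes only when $J = H$, i.e., when $H \subseteq \widetilde H'$. In that case $\widetilde H \subseteq H \subseteq \widetilde H'$ forces $\widetilde H \cap \widetilde H' = \widetilde H$, so the condition $\widetilde H \cap \widetilde H' = H''$ pins down $\widetilde H = H''$ uniquely, contributing $\mu(H'', H)$.

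After this reduction the original sum collapses to
$$
\mu(H'', H) \sum_{H \subseteq \widetilde H' \subseteq H'} \mu(\widetilde H', H'),
$$
and the standard defining identity of the Moebius function $\sum_{A \subseteq C \subseteq B}\mu(C, B) = [A=B]$ (with the sum understood to be empty, hence $0$, when $A \not\subseteq B$) gives exactly $\mu(H'', H)$ if $H = H'$ and $0$ otherwise, matching the claim.

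The only subtle point I would verify carefully is that it is legitimate to apply Lemma~\ref{lemm:weisner} inside the subgroup lattice of $H$ rather than that of the full ambient abelian group: this is fine because $\mu(A,B)$ depends only on the interval $[A,B]$ in the subgroup poset, and this interval is identical whether computed in $H$ or in the ambient group. Beyond that verification, the argument is a one-shot application of Weisner's lemma followed by Moebius cancellation, so I do not anticipate a genuine obstacle.
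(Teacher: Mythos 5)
Your argument is correct and is essentially the paper's own proof with the two summation variables in the opposite order: the paper fixes $\widetilde H$ and applies Weisner's lemma to the inner sum over $\widetilde H'$ in the lattice of $H'$, whereas you fix $\widetilde H'$ and apply it to the inner sum over $\widetilde H$ in the lattice of $H$; both then finish with the same M\"obius recursion $\sum_{A\subseteq C\subseteq B}\mu(C,B)=[A=B]$. Your closing remark that $\mu$ only depends on the interval is a worthwhile point the paper makes elsewhere (in the proof of Theorem~\ref{thm:main}) rather than here.
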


\begin{proof}
We rearrange the summation
\begin{align*}
\label{eqn:summoebius2}
\sum_{\substack{\widetilde H\subseteq H, \widetilde H'\subseteq H', \, \widetilde H\cap \widetilde H'= H''}} & \mu(\widetilde H,H)\mu(\widetilde H', H')
\\
&=\sum_{\widetilde H\subseteq H}\mu(\widetilde H,H)\sum_{\substack{\widetilde H'\subseteq H',\, \widetilde H'\cap \widetilde H= H''}}\mu(\widetilde H', H'),
\end{align*} 
and apply Lemma~\ref{lemm:weisner}. 
When $H=H'$, the right side 
equals 
$$
\mu(H, H)\mu(H'', H')=\mu(H'', H).
$$
When $H\ne H'$, the right side 
equals 
\begin{align*}
\sum_{H'\subseteq \widetilde H\subseteq H}\mu(\widetilde H, H)\mu(H'', H')=\mu(H'', H')\sum_{H'\subseteq \widetilde H\subseteq H}\mu(\widetilde H, H)=0.
\end{align*}

\end{proof}

\section{Symbols groups}
\label{sect:symb}
Let $G$ be a finite {\em abelian} group, 
$$
G^\vee=\Hom(G,\bC^\times)
$$ 
its character group, and
$$
\cS_n(G) 
$$
the $\bZ$-module generated by $n$-tupels of characters of $G$, 
$$
\beta=(b_1,\ldots, b_n), \quad b_j \in G^\vee, 
\text{ for all } j, 
$$
{\em generating} $G^\vee$, 
modulo the relation

\

\noindent
{\bf (O)} reordering: for all $\beta=(b_1,\ldots, b_n)$ and all $\sigma\in \fS_n$ we have
$$
\beta= \beta^{\sigma}:=(b_{\sigma(1)},\ldots, b_{\sigma(n)}).
$$

\

Consider the quotient 
$$
\cS_n(G) \to \cB_n(G)
$$
by the {\em blowup relation} 

\

\noindent
{\bf (B)}: for all $\beta=(b_1,b_2,\ldots, b_n)$, one has
\begin{equation*}
\label{eqn:rel}
\beta=\beta_1 +\beta_2,
\end{equation*}
where 
\begin{equation}
\label{eqn:beta12}
\beta_1:= (b_1-b_2, b_2, \ldots, b_n), \quad 
\beta_2:= (b_1, b_2-b_1, \ldots, b_n), \quad n\ge 2.
\end{equation}

\

\noindent
For $H\subseteq G$ and $\beta=(b_1,\ldots, b_n)$ we 
put
\begin{equation}
\label{eqn:beta}
\beta|_{H}:=(b_1|_{H}, \ldots, b_n|_{H}).
\end{equation}

The groups $\cB_n(G)$ were introduced in \cite{KPT}; they capture {\em equivariant birational invariants} of $G$-actions on $n$-dimensional algebraic varieties. Combining constructions in \cite{KPT} and \cite{KT-arith}, we know that the groups 
$$
\cB_n(G)_{\bQ}:=\cB_n(G)\otimes \bQ
$$ 
have an interesting internal structure, e.g., they carry:
\begin{itemize}
\item Hecke operators, 
\item multiplication and co-multiplication arising from exact sequences
$$
0\to G'\to G\to 
G''\to 0, 
$$
e.g., multiplication
$$
\nabla : \cB_{n'}(G')_{\bQ}\otimes \cB_{n''}(G'')_{\bQ} \to 
\cB_{n'+n''}(G)_{\bQ}. 
$$
\end{itemize}

\section{Combinatorial Burnside groups}
\label{sect:gen}

\subsection*{Definitions}
Let $G$ be a finite group and $n$ a positive integer. 
The 
{\em combinatorial symbols group} is the  
$\bZ$-module
$$
\mathcal{SC}_n(G), 
$$
generated by triples 
$$
(H, Y, \beta),
$$
where
\begin{itemize}
\item $H\subseteq G$ is an abelian group, 
\item $Y\subseteq G$ is a subgroup satisfying $H\subseteq Y\subseteq Z_G(H)$, and
\item $\beta=(b_1,b_2,\ldots, b_r)$ is a sequence of {\em nontrivial} characters of $H$ of length $r=r(\beta)$, with $1\le r\le n$, {\em generating} $H^\vee$, 
\end{itemize}
subject to relation

\

\noindent
\begin{itemize}
\item[{\bf (O)}] reordering: for all $(H,Y,\beta)$, with 
$\beta=(b_1,\ldots, b_r)$, and $\sigma\in \fS_r$ we have
$$
(H,Y,\beta) = (H,Y,\beta^\sigma), \quad 
\beta^\sigma:= (b_{\sigma(1)},\ldots, b_{\sigma(r)}).
$$
\end{itemize}

\

By convention, 
$$
\SC_0(G):=\bZ.
$$

The {\em combinatorial Burnside group}  is a quotient of
the combinatorial symbols group, 
$$
\SC_n(G)\to \BC_n(G),
$$
obtained by imposing additional relations \cite[Definition 8.1]{KT-struct}:
\begin{itemize}
 \item[($\mathbf{C}$)] conjugation: for all $H$, $Y$, and $\beta$, we have
$$
(H,Y,\beta) = (gHg^{-1}, gYg^{-1}, \beta^g), 
$$
where $\beta^g$ is the image of $\beta$ under the conjugation by $g\in G$, 
\item[($\mathbf{V}$)] vanishing: 
$$
(H, Y, \beta)=0
$$
in each of the following cases: 
\begin{itemize}
\item[$\circ$] $H=1$, 
\item[$\circ$] $b_1+b_2=0$, for some characters $b_1,b_2$ in $\beta$,
\end{itemize}
\item[($\mathbf{B2}$)] blowup relation:


\noindent
for $b_1=b_2$, put:
\begin{equation}
\label{eqn:red}
(H,Y, (b_1,\ldots, b_r)) = (H,Y,(b_2,\ldots, b_r));
\end{equation}
for $b_1\neq b_2$, put:
\begin{align*}
&(H,Y,\beta)=\\
&\begin{cases}
(H, Y, \beta_1) + (H, Y, \beta_2)& \text{ if } 
b_i\in \langle b_1-b_2\rangle, \text{ for some } i,\\  
\underbrace{(H, Y, \beta_1) + (H, Y, \beta_2)}_{\Theta_1} + \underbrace{(\bar{H}, Y, \bar{\beta})}_{\Theta_2}  & \text{ otherwise. } 
\end{cases}
\end{align*}
Here we put
\begin{equation}\label{eqn:Theta2}
\beta_1:=(b_1-b_2, b_2, b_3,\ldots, b_r), \quad \beta_2:=(b_1,b_2-b_1,b_3, \ldots, b_r),
\end{equation}
$$
\bar{H}:=\ker(\langle b_1-b_2\rangle)\subseteq H, \quad \bar\beta:=\beta|_{\bar{H}}.
$$
\end{itemize}
The notation $\Theta_1,\Theta_2$ was used in \cite[Section 4]{BnG} and \cite[Section 2]{KT-struct}.

Relation \eqref{eqn:red} allows to shorten the length of $\beta$ in the presence of repeated characters; we call a symbol {\em reduced} if the characters in $\beta$ are pairwise distinct.


\subsection*{Filtration}
The blowup relation ($\mathbf{B2}$) does not increase $r(\beta)$, the number of characters in $\beta$. This allows to introduce 
$$
\BC_{n,r}(G)\subset \BC_n(G)
$$
as the $\bZ$-submodule generated by reduced symbols where
$\beta$ satisfies 
$r(\beta)\le r$.
We have surjective homomorphisms
$$
\BC_r(G) \to \BC_{n,r}(G), \quad 1\le r\le n, 
$$
which need not be isomorphisms, for $r<n$.

\subsection*{Vanishing}
Relation ($\mathbf{V}$) implies (see \cite[Proposition 4.7]{BnG}) that 
$$
(H, Y,\beta)=0 \in \BC_n(G),
$$
provided there exist  a nonempty $I\subseteq [1,\ldots, r]$
and characters $b_i, i\in I$, such that 
\begin{equation}
\label{eqn:I}
\sum_{i\in I} b_i=0 \in H^\vee.
\end{equation}

\begin{prop}\label{prop:vanish}
For a fixed $G$, we have
$$
\BC_{n}(G)=0, \quad n \gg 0.  
$$
\end{prop}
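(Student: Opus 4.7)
The plan is to show that $\BC_n(G)=0$ as soon as $n\ge 2|G|-1$, by identifying each generator $(H,Y,\beta)$ with a longer symbol in $\BC_n(G)$ that is killed by the zero-sum consequence \eqref{eqn:I} of the vanishing relation $(\mathbf V)$.

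First, by \eqref{eqn:I}, any nonzero reduced generator $(H,Y,\beta)$ admits no nonempty zero-sum subsequence among its characters. The Davenport constant of the finite abelian group $H^\vee$ then provides the crude bound $r(\beta)\le |H|-1\le |G|-1$, so we need only deal with symbols of length at most $|G|-1$.

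Given such a $\beta=(b_1,\dots,b_r)$, pick any character $b_i\in\beta$ and let $N_i$ be its order in $H^\vee$. Form $\beta'$ by appending $N_i-1$ extra copies of $b_i$ to $\beta$. Its length is $r+N_i-1\le (|G|-1)+|G|\le 2|G|-1\le n$; all its characters remain nontrivial; and since $\beta\subseteq\beta'$ it still generates $H^\vee$. Hence $(H,Y,\beta')$ is a valid generator of $\BC_n(G)$. Two observations conclude the argument. First, using the reordering relation $(\mathbf O)$ to bring two equal copies of $b_i$ into the leftmost two slots, and then iterating the reduction rule \eqref{eqn:red} of $(\mathbf{B2})$ a total of $N_i-1$ times, we obtain $(H,Y,\beta')=(H,Y,\beta)$ in $\BC_n(G)$. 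Second, the $N_i$ copies of $b_i$ present in $\beta'$ sum to $N_ib_i=0$ and so form a nonempty zero-sum subsequence; by \eqref{eqn:I} this gives $(H,Y,\beta')=0$. Combining, $(H,Y,\beta)=0$, whence $\BC_n(G)=0$.

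The only mildly delicate point is to check that every intermediate symbol appearing along the iterated reduction remains a legitimate element of $\BC_n(G)$: this is automatic since their lengths lie between $r$ and $r+N_i-1\le n$, and the nontriviality/generation conditions persist throughout. With a bit more bookkeeping one can sharpen the bound to something like $n\ge D(H^\vee)+\exp(H^\vee)-2$, maximized over abelian subgroups $H\subseteq G$, but this refinement is not needed for the statement as given.
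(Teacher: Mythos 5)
Your proof is correct and is essentially the paper's own argument: both pad a symbol with extra copies of one of its characters until order-many copies form a zero-sum subsequence (killing the padded symbol via the consequence \eqref{eqn:I} of $(\mathbf V)$), and then collapse the repeats back via \eqref{eqn:red} to conclude the original symbol vanishes. The only cosmetic differences are that the paper pads with $\ell(G)$ copies uniformly (where $\ell(G)$ is the maximal element order) and bounds reduced lengths by distinctness of characters, whereas you pad with exactly $\mathrm{ord}(b_i)$ copies and invoke the Davenport constant for the length bound.
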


\begin{proof}
Let $\ell=\ell(G)$ be the maximal order of an element of $G$. 
We have 
$$
0=(H,Y,(\underbrace{b_1,\ldots, b_1}_{\ell {\, {\rm times}}}, b_2, \ldots, b_{n-\ell} )) =
(H,Y, (b_1,b_2, \ldots, b_{n-\ell})) \in \BC_n(G),
$$
for any choices of $b_i$, which implies that  
$$
\BC_{n,r}(G) =0, \quad 1\le r\le n-\ell. 
$$
It suffices to observe that for $n\gg 0$, every 
{\em reduced} symbol has $r(\beta)\le n-\ell$. 
\end{proof}

We define the {\em combinatorial dimension}:
\begin{equation}
\label{eqn:n}
\cd(G):=\min \{ n\in \bN \, |\, \BC_{m}(G) =0 \quad \forall m>n\}.  
\end{equation}
We may also consider versions of this for 
$$
\BC_m(G)\otimes \bQ, \quad \text{  respectively, } \quad \BC_m(G)\otimes \bF_p,
$$
and denote the corresponding smallest $n$ as in \eqref{eqn:n} by
$$
\cd_{\bQ}(G), \quad \text{ respectively, } \quad \cd_p(G).
$$

Computer experiments and Theorem \ref{thm:main} suggest the following:

\begin{conj}
Let $G$ be a finite group, and $H\subseteq G$
a maximal abelian subgroup. Then
$$
\cd(G)\le \log_{2}({|H|}), \quad \text{ and } \quad \cd_{\bQ}(G)\le  \log_{3}({|H|})+1.
$$ 
In particular, for $G=\mathfrak{S}_m$, based on the determination of maximal abelian subgroups of $\fS_n$ in \cite{MaxAbelSn}, we have
$$
\cd(G)\le \frac{m}{3} \log_{2}({3}), \quad \text{ and } \quad 
\cd_{\bQ}(G)\le  \frac{m}{3}+1.
$$ 
\end{conj}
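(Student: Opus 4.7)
The plan is to reduce the conjecture, via Theorem~\ref{thm:main}, to a uniform vanishing statement for the symbol groups $\cB_n(H)$ of finite abelian groups $H$, and then to attack this vanishing by combining pigeonhole in $H^\vee$ with the blowup relation~\textbf{(B2)}.

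By Theorem~\ref{thm:main}, $\BC_n(G)\simeq\bigoplus_{[H,Y]}\cB_n([H,Y])$, with each summand a quotient of $\cB_n(H)$ for an abelian $H\subseteq G$. Since $|H|\le|H_{\max}|$ for every abelian subgroup, it suffices to prove the following uniform vanishing for finite abelian groups: $\cB_n(H)=0$ for $n>\log_2|H|$, and $\cB_n(H)\otimes\bQ=0$ for $n>\log_3|H|+1$. Granting this, the $\fS_m$ corollary follows from $|H_{\max}(\fS_m)|\sim 3^{m/3}$ in \cite{MaxAbelSn}, together with the computations $\log_2(3^{m/3})=(m/3)\log_2 3$ and $\log_3(3^{m/3})+1=m/3+1$.

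The combinatorial starting point is pigeonhole in $H^\vee$: for any symbol $(b_1,\ldots,b_n)$ with $n>\log_2|H|$, the $2^n$ subset-sums $\sum_{i\in I}b_i$ cannot all be distinct elements of $H^\vee$, so two coincide and produce a nontrivial signed relation $\sum_i\epsilon_i b_i=0$ with $\epsilon_i\in\{-1,0,1\}$. When all nonzero $\epsilon_i$ have the same sign, this is an unsigned zero-sum and the symbol vanishes by~\eqref{eqn:I}. In the general mixed-sign case, one must use the blowup axiom~\textbf{(B2)}, which expands $(b_1,b_2,\ldots)$ as $(b_1-b_2,b_2,\ldots)+(b_1,b_2-b_1,\ldots)$ plus, when $b_1-b_2$ fails to generate, a $\Theta_2$-term $(\bar H,Y,\bar\beta)$ supported on the strictly smaller subgroup $\bar H=\ker\langle b_1-b_2\rangle$, iteratively until each summand admits an unsigned zero-relation. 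Over $\bQ$ the pigeonhole sharpens to $3^n$ signed sums with coefficients in $\{-1,0,1\}$, and the improved $\log_3$-bound should follow after exploiting the rational Hecke symmetries of Section~\ref{sect:symb}.

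The main obstacle is precisely this sign-conversion step. The Davenport-type zero-sum bound alone gives only $\cB_n(H)=0$ for $n\ge D(H)$, which for $H=\bZ/N$ is $N$---trivially weak compared to the $\log_2 N$ sought---so the conjecture is really asserting that~\textbf{(B2)} is dramatically more powerful than pure pigeonhole. Experimentally the mechanism manifests through extension-and-blowup identities such as, for $H=\bZ/3$, the chain $(a,-a,-a)=(-a,-a,-a)+(a,a,-a)=(-a)$, which together with the vanishing $(a,-a,-a)=0$ forces $(-a)=0$ at length three. Turning such manipulations into a systematic argument plausibly requires induction on $|H|$: the $\Theta_2$-contributions land in $\cB_n(\bar H)$ with $|\bar H|<|H|$ and vanish by the inductive hypothesis whenever $n>\log_2|\bar H|$, leaving one to control the $\Theta_1$ sum, presumably via the Moebius formalism of Section~\ref{sect:moebius}, and further reducing to primary components $H=\bigoplus_p H_p$ with the $p=2$ case---where $-b=b$ already makes pigeonhole tight---as the base.
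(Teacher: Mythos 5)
This statement is labeled a \emph{conjecture} in the paper, and the authors offer no proof: they only say that computer experiments and Theorem~\ref{thm:main} \emph{suggest} it. So there is no argument of theirs to compare yours against, and the real question is whether your proposal closes the gap. It does not. Your reduction via Theorem~\ref{thm:main} and the decomposition \eqref{eqn:deco} to a uniform vanishing statement for $\cB_n(H)$, $H$ abelian, is sound and is surely the intended route (it is why the authors bound $\cd(G)$ by a function of the maximal abelian subgroup only). But the heart of the matter --- showing $\cB_n(H)=0$ for $n>\log_2|H|$ --- is exactly the step you leave open. The pigeonhole argument produces a relation $\sum_i\epsilon_i b_i=0$ with $\epsilon_i\in\{-1,0,1\}$ of mixed sign, whereas the vanishing mechanism \eqref{eqn:I} only applies to unsigned zero-sums $\sum_{i\in I}b_i=0$. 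You name the conversion of one into the other via the blowup relation as ``the main obstacle'' and then do not carry it out; the $\bZ/3$ computation is a single instance, not a mechanism, and your own Davenport-constant remark shows that without this step the method gives bounds of order $|H|$ rather than $\log|H|$. The proposed induction on $|H|$ via $\Theta_2$-terms is also not available in the form you state it: once you have reduced to $\cB_n(H)$ for abelian $H$, the relevant relation is $(\mathbf{B})$ of Section~\ref{sect:symb}, which has no $\Theta_2$-term; the terms supported on $\bar H=\ker\langle b_1-b_2\rangle$ belong to the $\BC$-side of the dictionary, which you have already traded away by invoking Theorem~\ref{thm:main}.

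The rational bound is in even worse shape: ``the improved $\log_3$-bound should follow after exploiting the rational Hecke symmetries'' is not an argument, and the count of signed sums you give ($3^n$) does not even match the stated threshold $\log_3|H|+1$ without further explanation. In short, your proposal correctly identifies the right reduction and the right combinatorial obstruction, but it proves neither inequality; as submitted it is a research plan for the conjecture, not a proof of it.
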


\

\subsection*{Restriction}
Section 7 in \cite{KT-struct} introduced  
the {\em restriction homomorphism};  
in our context, for $G'\subseteq G$, it takes the form: 
$$
\res_{G'}^G: \cBC_n(G)\to \cBC_n(G').
$$
The group $G$ acts by conjugation on the set of generating symbols as in $\mathbf{(C)}$. For any symbol 
$$
\mathfrak s=(H, Y,\beta),
$$
the conjugation action by $G'$ partitions the conjugacy class of $\mathfrak s$ into finitely many orbits. The restriction map is given by 
$$
\mathfrak s\mapsto\sum_{\mathfrak s'}(H'\cap G', Y'\cap G', \beta'\vert_{H'\cap G'}),
$$
where the sum is over orbit representatives $\mathfrak s'=(H', Y', \beta')$. The map respects relations, by construction. It is not
surjective, in general; e.g., 
$$
\cBC_2(\mathfrak{S}_3)=\bZ/2, \quad \cBC_2(\fC_3)=\bZ.
$$

\subsection*{Ring structure}

There is a product map 
$$
 \cBC_{n}(G) \times \cBC_{n'}(G) \rightarrow \cBC_{n+n'}(G),
$$
given as the composition of 
\begin{align*}
    \cBC_{n}(G) \times \cBC_{n'}(G) &\rightarrow \cBC_{n+n'}(G \times G)\\
    (H,Y,\beta) \times (H',Y',\beta') &\mapsto (H\times H',Y\times Y',\beta \cup \beta')
\end{align*}
with restriction to the diagonal. 

We obtain a finitely-generated graded ring 
$$
\cBC_*(G):=\oplus_{n \geq 0}\,\, \BC_n(G),
$$
with $\cBC_0(G)=\bZ$,
subject to various functoriality properties.

\section{Structure theory}
\label{sect:struct}

In this section, we establish an isomorphism of $\BC_n(G)$ with a simpler quotient of the combinatorial symbols group 
$$
\SC_n(G)\to \BC_n'(G),
$$
defined via relations $(\mathbf{C}), (\mathbf{V})$, together with the following modification of the blowup relation:

\

\noindent
\begin{itemize}
\item[$(\mathbf{B2'})$] 
for $b_1=b_2$, put:
\begin{equation}
\label{eqn:redprime}
(H,Y, (b_1,b_2,\ldots, b_r)) = (H,Y,(b_2,\ldots, b_r));
\end{equation}
for $b_1\neq b_2$, put:
$$
(H,Y,\beta)=(H, Y, \beta_1) + (H, Y, \beta_2),
$$
where $\beta_1,\beta_2$ are as in \eqref{eqn:Theta2}.
\end{itemize}

\

For clarity, we will write 
$$
(H,Y,\beta)',
$$
when we view the corresponding symbol as an element in $\BC_n'(G)$.

The relations respect the $G$-conjugacy class $[H,Y]$ of the pair $(H,Y)$; so that 
\begin{equation}
\label{eqn:deco}
\BC_n'(G) =\bigoplus_{[H,Y]}\,\,\, \cB_n([H,Y]), 
\end{equation}
where 
$$
\cB_n([H,Y]):=\bigoplus_{H',Y',\beta'} \,\,\bZ (H',Y',\beta')  /  (\mathbf{C}), (\mathbf{V}), (\mathbf{B2'}), \quad (H',Y') \in [H,Y].
$$

Consider the following conjugation relation on $\cS_n(H)$: 

\

\noindent
$(\mathbf{C}_{(H,Y)})$: for all $\beta \in \cS_n(H)$ and $g\in N_G(H)\cap N_G(Y)$ we have
$$
\beta=\beta^g. 
$$

\

\begin{lemm}
We have have an isomorphism of abelian groups
\begin{align}\label{eqn:normalizeriso}
\cB_n([H,Y])\simeq \cB_n(H) / (\mathbf{C}_{(H,Y)}).
\end{align}
\end{lemm}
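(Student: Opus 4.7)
The plan is to produce mutually inverse homomorphisms between $\cB_n([H,Y])$ and $\cB_n(H)/(\mathbf{C}_{(H,Y)})$.

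In one direction, define $\phi\colon \cS_n(H)\to \cB_n([H,Y])$ by $\beta\mapsto (H,Y,\beta)'$. I would check that $\phi$ factors through $\cB_n(H)/(\mathbf{C}_{(H,Y)})$: the relations (O) and (B) on $\cB_n(H)$ correspond to (O) and $(\mathbf{B2'})$ on the target, and for any $g\in N_G(H)\cap N_G(Y)$, the conjugation relation (C) applied in $\cB_n([H,Y])$ gives
$$
(H,Y,\beta)'=(gHg^{-1},gYg^{-1},\beta^g)'=(H,Y,\beta^g)',
$$
matching $(\mathbf{C}_{(H,Y)})$.

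For the inverse, I would choose once and for all, for each pair $(H',Y')\in[H,Y]$, an element $\gamma_{H',Y'}\in G$ with $\gamma_{H',Y'}(H,Y)\gamma_{H',Y'}^{-1}=(H',Y')$, taking $\gamma_{H,Y}=e$. Define
$$
\psi\colon \cB_n([H,Y])\to \cB_n(H)/(\mathbf{C}_{(H,Y)})
$$
by sending a generator $(H',Y',\beta')$ to the class of the pullback of $\beta'$ along the conjugation isomorphism $H\xrightarrow{\sim} H'$, $h\mapsto \gamma_{H',Y'}h\gamma_{H',Y'}^{-1}$. Pullback along a group isomorphism preserves all combinatorial data (generation of the dual, character sums), so (O), (V), and $(\mathbf{B2'})$ on the source translate to (O), (V), and (B) on $\cB_n(H)$.

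The main obstacle is showing that $\psi$ respects the conjugation relation (C): the symbols $(H',Y',\beta')$ and $(hH'h^{-1},hY'h^{-1},\beta'^h)$ must have the same image for every $h\in G$. Both $h\gamma_{H',Y'}$ and $\gamma_{hH'h^{-1},hY'h^{-1}}$ conjugate $(H,Y)$ to the common target pair, so they differ by (left multiplication with) an element of $N_G(H)\cap N_G(Y)$; the two candidate pullbacks of the underlying characters therefore differ by conjugation by such an element, an ambiguity killed precisely by $(\mathbf{C}_{(H,Y)})$. The identities $\psi\circ\phi=\mathrm{id}$ and $\phi\circ\psi=\mathrm{id}$ then follow, respectively, from the normalization $\gamma_{H,Y}=e$ and from a single application of (C) in $\cB_n([H,Y])$.
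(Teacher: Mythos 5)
Your proposal is correct and follows essentially the same route as the paper: both directions are the conjugation-transport maps, with well-definedness coming down to the observation that two elements conjugating $(H,Y)$ to the same pair differ by an element of $N_G(H)\cap N_G(Y)$, which is exactly what $(\mathbf{C}_{(H,Y)})$ quotients out (the paper phrases this as independence of the choice of $g$ rather than fixing representatives $\gamma_{H',Y'}$, but the content is identical). The only detail you gloss over, which the paper handles explicitly, is the bookkeeping between length-$n$ tuples in $\cS_n(H)$ (which may contain zero characters) and length-$r$ symbols with nontrivial characters: one pads with zeros in one direction and strips them in the other.
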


\begin{proof}
Fix a representative $(H,Y)$ of the conjugacy class and
consider 
$$
(H',Y', \beta'), \quad (H',Y')\in [H,Y],\quad 
\beta'=(b_1',\ldots,b_r').
$$
Let $g\in G$ be such that
$$
H= gH'g^{-1}\quad\text{and}\quad
    Y= gY'g^{-1},
$$
and put
$$
(b_i')^{g}:= \text{image of } b_i' \text{ under the conjugation by } g, \quad i=1,\ldots,r.
$$
Consider the homomorphism given on symbols in $\cB_n([H,Y])$ by 
\begin{align}\label{eqn:normalizermap}
(H',Y',\beta')\mapsto ((b_1')^{g},\ldots,(b_r')^{g}, \underbrace{0, \ldots, 0}_{n-r})\in \cS_n(H).
\end{align}
This is independent of the choice of $g$: given 
$
g, g'\in G,
$ 
such that  
$$
H'= g^{-1}Hg=g'^{-1}Hg'\quad\text{and}\quad
    Y'= g^{-1}Yg=g'^{-1}Yg',
$$
we have
$$
g'g^{-1}\in N_G(H)\cap N_G(Y).
$$
Therefore, by definition of $(\mathbf{C}_{(H,Y)})$, we have
$$
 ((b_1')^{g},\ldots,(b_r')^{g}, \underbrace{0, \ldots, 0}_{n-r})=((b_1')^{g'},\ldots,(b_r')^{g'}, \underbrace{0, \ldots, 0}_{n-r}),
$$
since 
$$
((b_i')^{g})^{(g'g^{-1})}=(b_i')^{g'},\quad i=1,\ldots, r.
$$
The mapping \eqref{eqn:normalizermap} respects $\mathbf{(C)}$, by construction. Indeed, for any $g\in G$, the symbols 
$$
(H', Y', \beta')\quad\text{and}\quad (gH'g^{-1}, gY'g^{-1},\beta'^g)
$$
will be mapped to the same element in $\cB_n(H)/(\mathbf{C}_{(H,Y)})$
since 
$$
H=g'H'g'^{-1}\quad\text{implies}\quad H=g'g^{-1}\cdot(gH'g^{-1})\cdot gg'^{-1}.
$$
To see its compatibility with $\mathbf{(V)}$ and $\mathbf{(B2')},$ it suffices to observe that the conjugation action is linear, i.e.,
$$
(b_1+b_2)^g=b_1^g+b_2^g,\quad\text{ for all } b_1,b_2\in H^{\vee}, \,g\in G,
$$
and we have the following identities in $\cB_n(H)$, by definition:
\begin{itemize}
    \item $(b_1, b_1, b_2,\ldots)=(0,b_1,b_2,\ldots)$,
    \item $(b_1,b_2,\ldots)=(b_1-b_2, b_2,\ldots)+(b_1,b_2-b_1,\ldots)$,
    \item $(b_1, -b_1,\ldots)=0.$
\end{itemize}
On the other hand, by conjugation relations, the map defined by
$$
(b_1,\ldots,b_n) \mapsto (H,Y,\beta),
$$
where $\beta$ is obtained by removing all 0's in the sequence of $b_i$, is the inverse of the map  \eqref{eqn:normalizermap}. It is clearly compatible with $\mathbf{(B)}$ and $\mathbf{(C}_{(H,Y)})$. Therefore 
\eqref{eqn:normalizermap} induces the desired isomorphism \eqref{eqn:normalizeriso}.
\end{proof}

\

We will now construct an isomorphism of $\bZ$-modules
$$
\BC_n(G)\simeq \BC_n'(G). 
$$
The decomposition \eqref{eqn:deco} above allows us to efficiently compute $\BC_n(G)$, 
and to import further structures into $\BC_n(G)$.

We start by defining a poset relation on the set of symbols: 
$$
\mathfrak s' := (H',Y', \beta') \le (H,Y, \beta)=:\mathfrak s 
$$
if and only if
\begin{itemize}
\item $Y=Y'$,
\item $H'\subseteq H$, and 
\item $\beta'=\beta|_{H'}$.
\end{itemize}
We observe that the {\em intervals} in this poset relation are isomorphic, as posets, to intervals in the poset $\mathcal H$ of abelian subgroups of $G$. Locally, these intervals are isomorphic to intervals in posets of subgroups of finite abelian groups; the corresponding Moebius function is 
the one in \eqref{eqn:moe}.

Consider the following homomorphisms of $\bZ$-modules
$$
\Psi, \Phi: \SC_n(G) \to \SC_n(G)
$$
defined on symbols by
$$
\Psi: (H,Y,\beta)\mapsto \sum_{H'\subseteq H} (H',Y,\beta')',
$$
respectively, 
$$
\Phi: (H,Y,\beta)'\mapsto \sum_{H'\subseteq H} \mu(H',H) (H',Y,\beta'),
$$
where 
$$
\beta'=\beta|_{H'},
$$
%
and extended by linearity. By convention, if $\beta'$ contains a zero, the symbol is 
considered to be zero. 

These are isomorphisms (see Section~\ref{sect:moebius}), we have
$$
\Psi\circ \Phi = \Phi\circ \Psi =\mathrm{Id}.
$$

\begin{theo}
\label{thm:main}
For all $n\ge 1$ and all $G$, the homomorphism $\Psi$ descends to the respective quotients of the combinatorial symbols group, yielding a commutative diagram of abelian groups

\

\centerline{
\xymatrix{
\SC_n(G) \ar[r]^{\Psi} 
\ar[d]_{(\mathbf{C}),(\mathbf{V}), (\mathbf{B2})} & \SC_n(G)\ar[d]^{(\mathbf{C}),(\mathbf{V}), (\mathbf{B2'})} \\
\BC_n(G)\ar[r]^{\Psi} & \BC_n'(G),
}
}

\

\noindent
with an isomorphism on the bottom row, whose  inverse is given by $\Phi$. 
\end{theo}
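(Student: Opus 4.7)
The plan is to verify that both $\Psi$ and $\Phi$ descend to well-defined homomorphisms between $\BC_n(G)$ and $\BC_n'(G)$; once this is established, they are automatically mutually inverse, in view of the identity $\Psi\circ\Phi=\Phi\circ\Psi=\mathrm{Id}$ on $\SC_n(G)$ furnished by Moebius inversion as in Section~\ref{sect:moebius}. Compatibility with $(\mathbf{C})$ and $(\mathbf{V})$ is essentially formal: conjugation by $g\in G$ permutes the summation over $H'\subseteq H$, and a vanishing feature of $\beta$ (a trivial character, or a pair with $b_i+b_j=0$) is inherited by every restriction $\beta|_{H'}$. The real work lies in the blowup relations.

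For the descent of $\Psi$, I would partition $\Psi((H,Y,\beta))=\sum_{H'\subseteq H}(H',Y,\beta|_{H'})'$ according to whether $H'\subseteq\bar H$ or not. When $H'\not\subseteq\bar H$, we have $b_1|_{H'}\neq b_2|_{H'}$ and $(\mathbf{B2'})$ applies termwise, producing the $\beta_1|_{H'}$ and $\beta_2|_{H'}$ contributions. When $H'\subseteq\bar H$, both $\beta_1|_{H'}$ and $\beta_2|_{H'}$ acquire a trivial character and vanish in $\BC_n'(G)$, while $\beta|_{H'}=\bar\beta|_{H'}$ assembles into $\Psi((\bar H,Y,\bar\beta))$. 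The total matches the Case-2 form of $(\mathbf{B2})$; in Case 1 the observation that $b_i\in\langle b_1-b_2\rangle$ forces $b_i|_{\bar H}=0$ makes the $\Theta_2$-block vanish entirely by $(\mathbf{V})$, consistent with the shorter Case-1 relation.

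The descent of $\Phi$ is subtler. Applying $(\mathbf{B2})$ in its unified form (valid precisely because in Case 1 the $\Theta_2$ symbol already vanishes by $(\mathbf{V})$) to each summand $(H',Y,\beta|_{H'})$ of $\Phi((H,Y,\beta)')$, the $\beta_1$- and $\beta_2$-pieces cancel against $\Phi((H,Y,\beta_1)')+\Phi((H,Y,\beta_2)')$. The remaining $\Theta_2$-contributions for $H'\not\subseteq\bar H$, together with the terms for $H'\subseteq\bar H$ (where the reduction rule identifies $\beta|_{H'}$ with $(H'\cap\bar H,Y,\beta|_{H'\cap\bar H})$), regroup by $H''=H'\cap\bar H$ into
\begin{equation*}
\sum_{H''\subseteq\bar H}\Bigl(\sum_{\substack{H'\subseteq H\\ H'\cap\bar H=H''}}\mu(H',H)\Bigr)\,(H'',Y,\beta|_{H''}).
\end{equation*}
Since $b_1\neq b_2$ forces $\bar H\subsetneq H$, Lemma~\ref{lemm:weisner} applied inside the subgroup lattice of the abelian group $H$ makes every inner Moebius sum vanish, so the whole expression is zero in $\BC_n(G)$.

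The main obstacle is exactly this last bookkeeping step in the $\Phi$-check: correctly identifying, for each $H'\subseteq H$, which portion of $(H',Y,\beta|_{H'})$ is absorbed directly by the termwise $(\mathbf{B2})$ expansion and which portion passes to the proper subgroup $H'\cap\bar H$ as a $\Theta_2$-contribution, and then confirming that the grouping by $H''$ produces precisely the Moebius sums controlled by Lemma~\ref{lemm:weisner}. Once both $\bar\Psi:\BC_n(G)\to\BC_n'(G)$ and $\bar\Phi:\BC_n'(G)\to\BC_n(G)$ are known to be well defined, the $\SC_n(G)$-level identities descend to give the asserted commutative diagram, and exhibit $\bar\Phi$ as the inverse of the isomorphism $\bar\Psi$.
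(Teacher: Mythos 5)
Your argument is correct and follows essentially the same route as the paper's proof: the same splitting of the sum $\Psi((H,Y,\beta))$ according to whether $H'\subseteq\bar H$, the same regrouping of the $\Theta_2$-contributions by $H''=H'\cap\bar H$ in the $\Phi$-check, and the same appeal to Lemma~\ref{lemm:weisner} inside the subgroup lattice of $H$ using $\bar H\subsetneq H$. The only (harmless) difference is that you spell out why the Case-1 form of $(\mathbf{B2})$ can be treated uniformly, namely that its $\Theta_2$-term already vanishes, which the paper leaves implicit.
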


\begin{proof}
It is clear that both $\Psi$ and $\Phi$ respect relations $\mathbf{(C)}$ and $\mathbf{(V)}$. 
It remains to show their compatibility with $(\mathbf{B2})$, respectively, $\mathbf{(B2')}$. 

First, we show $\Psi$ is compatible with $\mathbf{(B2')}$, i.e. for any symbol
$$
\mathfrak s:=(H, Y, \beta),\quad \beta:=(b_1,\ldots,b_r),
$$
we have 
$$
\Psi(\mathfrak s)\stackrel{?}{=}\Psi((H, Y, \beta_1))+\Psi((H, Y, \beta_2))+\Psi(\Theta_2(\mathfrak s)) \in \BC_n'(G),
$$
with $\beta_1$, $\beta_2$ and $\Theta_2$ defined in \eqref{eqn:Theta2}. Assume $b_1\ne b_2$ and put 
$$
\bar H=\ker(b_1-b_2),
$$ 
then
$$
\Psi((H, Y, \beta_i))=\sum_{\substack{H'\subseteq H, H'\not\subseteq \bar H}}(H', Y,\beta_i|_{H'})',\quad i=1,2,
$$
since when $H'\subseteq \bar H$, the restriction of $\beta_1$ and $\beta_2$ to $H'$ will have nontrivial space of invariants 
(i.e., a zero in the sequence of characters). 

On the other hand, by definition, we have
\begin{align}\label{eqn:splitsum}
    \Psi(\mathfrak s)&=\sum_{\substack{H'\subseteq H, H'\not\subseteq \bar H}}(H', Y,\beta|_{H'})'+\sum_{\substack{H'\subseteq H\cap \bar H}}(H', Y,\beta|_{H'})'.
\end{align}
Observe that
$$
b_1\vert_{H'}=b_2\vert_{H'}\Leftrightarrow H'\subseteq \bar H.
$$ 
Applying $\mathbf{(B2')}$ to the right side of \eqref{eqn:splitsum} yields
\begin{align*}
    \Psi(\mathfrak s)=&\sum_{\substack{H'\subseteq H, H'\not\subseteq \bar H}}(H', Y,\beta_1|_{H'})'+\sum_{\substack{H'\subseteq H, H'\not\subseteq \bar H}}(H', Y,\beta_2|_{H'})'\\
    &+\sum_{\substack{H'\subseteq \bar H}}(H', Y, (b_2|_{H'}, b_3|_{H'},\ldots, b_r'|_{H'}))'\\
    =&\Psi((H, Y, \beta_1))+\Psi((H, Y, \beta_2))+\Psi(\Theta_2(\mathfrak s)).\\
\end{align*}


We now show that $\Phi$ respects $\mathbf{(B2')}$. By definition, we have
\begin{align}\label{eq:mbsum}
    \Phi((H, Y, \beta)')=\sum_{H'\subseteq H}\mu(H', H)(H', Y, \beta\vert_{H'}) \in \BC_n(G).
\end{align}
Consider all $\Theta_2$ terms in $\mathbf{(B2)}$ arising from symbols in the sum on the right side of \eqref{eq:mbsum}:
 \begin{align*}
    &\sum_{H'\subseteq H}\mu(H',H)\Theta_2(H', Y, \beta\vert_{H'})\\
    =&\sum_{H''\subseteq H}\left(\sum_{\bar H\cap H'=H''}\mu(H',H)\right)\cdot(H'',Y, \bar\beta\vert_{H''}).
\end{align*}  
It suffices to observe that $\mu(H', H)$ equals the corresponding value of the Moebius function of the subgroup 
{\em lattice} of the abelian group $H$. Therefore, the compatibility of $\Phi$ with $\mathbf{(B2)}$ 
reduces to Lemma~\ref{lemm:weisner}. 
\end{proof}

\section{Examples and applications}
\label{sect:exam}

\subsection{Abelian groups}
Classification of abelian subgroups of the plane Cremona group, i.e., 
of actions of abelian groups on rational surfaces, is well-understood (see \cite{blancthesis}, and the references therein). Much less is known in higher dimensions. First applications of the Burnside group formalism to the classification of such actions, in particular to actions of cyclic groups on cubic fourfolds, can be found in \cite{HKTsmall}.

When $G$ is abelian, Theorem~\ref{thm:main}, combined with decomposition \eqref{eqn:deco}, shows that
\begin{equation}
\label{eqn:abel}
\BC_n(G) = \bigoplus_{H' \subseteq G} \bigoplus_{H''\subseteq H'} \cB_n(H'').
\end{equation}
For elementary abelian $p$-groups  $G\simeq \bF_p^r$
$$
\cB_n(G)=0, \quad n<r, 
$$
since a sequence of characters of length $<r$ cannot generate the character group.
By \eqref{eqn:abel}, the computation of $\cBC_n(G)$ reduces to 
the computation of $\cB_n(H'')$, where $H''=\bF_p^m$ with $m \leq n$.
The number of $H'' \subseteq G$ such that $H''\cong \bF_p^m$ is
$\# \mathrm{Gr}(m,r)(\bF_p).$
Results in \cite[Section 5]{KPT}, especially Theorem 14, yield finer structural information about $\cB_n(H'')\otimes \bQ$.

\begin{prob}
Determine the ring structure of $\BC_*(G)$, where $G=\bF_p^r$.
\end{prob}

The isomorphisms $\Phi$ and $\Psi$ induce a ring structure on
$$
\BC'_*(G):=\bigoplus_{n\geq0}\BC'_n(G),
$$
with the product map
defined on symbols by
\begin{align}\label{eqn:BC'prod}
(H, Y, \beta)'\widetilde{\times}(H', Y', \beta')'\mapsto \Psi(\Phi((H, Y, \beta)'){\times}\Phi((H', Y', \beta')')).
\end{align}
By construction, $\Psi$ and $\Phi$ are ring isomorphisms 
$$
\cBC_*(G)\simeq\cBC_*'(G).
$$

\begin{prop}
When $G$ is abelian, the product \eqref{eqn:BC'prod} takes the form:
$$
(H, Y, \beta)'\widetilde{\times}(H', Y', \beta')'\longmapsto
\begin{cases}
0& \text{if } H\ne H',\\
(H, Y\cap Y', \beta\cup\beta')'&\text{otherwise}.
\end{cases}
$$
\end{prop}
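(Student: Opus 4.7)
The plan is to expand both factors of the product via $\Phi$, compute the resulting product inside $\BC_{*}(G)$ using the abelian hypothesis, apply $\Psi$ term by term, and then collapse the resulting double sum by Moebius inversion on the subgroup lattice.

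First, I would write, using bilinearity of the product,
$$
\Phi((H,Y,\beta)')\times\Phi((H',Y',\beta')') = \sum_{\widetilde H\subseteq H,\,\widetilde H'\subseteq H'}\mu(\widetilde H,H)\,\mu(\widetilde H',H')\,\bigl((\widetilde H,Y,\beta|_{\widetilde H})\times(\widetilde H',Y',\beta'|_{\widetilde H'})\bigr).
$$
Since $G$ is abelian, the conjugation action of $G\times G$ on $\cBC_{n+n'}(G\times G)$ is trivial and every conjugacy class is a singleton, so the restriction to the diagonal $G_\Delta\subseteq G\times G$ produces a single summand. Concretely, each product above simplifies to
$$
(\widetilde H\cap\widetilde H',\,Y\cap Y',\,\beta|_{\widetilde H\cap\widetilde H'}\cup\beta'|_{\widetilde H\cap\widetilde H'}),
$$
understood to be zero whenever a restricted character becomes trivial, by $\mathbf{(V)}$.

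Next, applying $\Psi$ and swapping the order of summation, I would regroup the result according to an abelian subgroup $K\subseteq H\cap H'$:
$$
\sum_{K}\,(K,Y\cap Y',\beta|_K\cup\beta'|_K)' \cdot \Biggl(\sum_{K\subseteq\widetilde H\subseteq H}\mu(\widetilde H,H)\Biggr)\Biggl(\sum_{K\subseteq\widetilde H'\subseteq H'}\mu(\widetilde H',H')\Biggr),
$$
using that the conditions $K\subseteq\widetilde H$ and $K\subseteq\widetilde H'$ decouple. By Moebius inversion on the subgroup lattice of $H$ (respectively $H'$), as in Section~\ref{sect:moebius}, each inner sum equals $\delta_{K,H}$, respectively $\delta_{K,H'}$. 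Hence the whole expression vanishes unless $H=H'$, in which case only the term $K=H$ survives and contributes $(H,Y\cap Y',\beta\cup\beta')'$, yielding the claimed formula. (Alternatively, one can first stratify the inner sum by $H'':=\widetilde H\cap\widetilde H'$ and invoke Corollary~\ref{coro:moebius2} directly.)

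The main point to check carefully, I expect, is not computational but definitional: verifying that in the abelian case the restriction from $\cBC_{n+n'}(G\times G)$ to $\cBC_{n+n'}(G_\Delta)$ is truly a single-term operation (because no nontrivial conjugation orbits can arise), and ensuring throughout that the convention of sending a symbol with a zero character to $0$ is compatible with $\mathbf{(V)}$, so that the manipulations remain valid when passed to $\BC'_{n+n'}(G)$.
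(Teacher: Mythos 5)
Your proposal is correct and follows the paper's overall strategy---expand both factors via $\Phi$, identify the product of two symbols as $(\widetilde H\cap\widetilde H',\,Y\cap Y',\,(\beta\cup\beta')|_{\widetilde H\cap\widetilde H'})$ using that conjugation is trivial for abelian $G$, and then collapse the double sum---but the key combinatorial step is organized differently. The paper regroups the double sum over $(\widetilde H,\widetilde H')$ according to the \emph{exact} intersection $H''=\widetilde H\cap\widetilde H'$ before applying $\Psi$, which produces the constrained sum $\sum_{\widetilde H\cap\widetilde H'=H''}\mu(\widetilde H,H)\mu(\widetilde H',H')$ and therefore requires Corollary~\ref{coro:moebius2}, whose proof rests on Weisner's theorem (Lemma~\ref{lemm:weisner}). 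You instead apply $\Psi$ first, so the inner condition becomes the containment $K\subseteq\widetilde H\cap\widetilde H'$ rather than an equality; the conditions on $\widetilde H$ and $\widetilde H'$ then decouple, and each factor collapses by the elementary identity $\sum_{K\subseteq\widetilde H\subseteq H}\mu(\widetilde H,H)=\delta_{K,H}$, which is exactly the content of $\Psi\circ\Phi=\mathrm{Id}$. This bypasses Weisner's theorem entirely and lands directly on the final answer, whereas the paper's route first produces $\Phi((H,Y\cap Y',\beta\cup\beta')')$ and then applies $\Psi$ to recover the symbol; your parenthetical alternative is precisely the paper's argument. Both proofs are valid; yours is slightly more self-contained, and your explicit justification that the diagonal restriction contributes a single summand in the abelian case is a point the paper leaves implicit.
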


\begin{proof}
When $G$ is abelian, the conjugacy relation plays no role. For any generating symbols
$$
(H, Y, \beta)',\quad (H', Y', \beta')'\in\BC'_*(G)
$$
we have, by definition, 
\begin{align*}
    & \Phi((H, Y, \beta)')\times\Phi((H', Y', \beta')')\\
    =&\left(\sum_{\widetilde H\subseteq H}\mu(\widetilde H, H)(\widetilde H,Y,\beta|_{\widetilde H})\right){\times}\left(\sum_{\widetilde H'\subseteq H'}\mu(\widetilde H', H')(\widetilde H',Y',\beta'|_{\widetilde H'})\right)\\
    =&\sum_{\substack{\widetilde H\subseteq H,\widetilde H'\subseteq H'}}\mu(\widetilde H,H)\mu(\widetilde H', H')\cdot(\widetilde H\cap \widetilde H', Y\cap Y',(\beta\cup\beta')|_{\widetilde H\cap \widetilde H'})\\
    =&\sum_{H''\subseteq H\cap H'}\left(\sum_{\substack{\widetilde H\subseteq H, \widetilde H'\subseteq H'\\\widetilde H\cap \widetilde H'=H''}}\mu(\widetilde H,H)\mu(\widetilde H', H')\right)\cdot(H'', Y\cap Y',(\beta\cup\beta')|_{H''})\\[0.2cm]
    =&\begin{cases}\displaystyle{\sum_{\widetilde H\subseteq H}\mu(\widetilde H, H)(\widetilde H, Y\cap Y',(\beta\cup\beta')|_{\widetilde H})}&\text{if }  H=H',\\
    0&\text{otherwise,}
    \end{cases}
\end{align*}
where the last equality follows from Corollary~\ref{coro:moebius2}. Applying $\Psi$ to the equality above completes the proof.
\end{proof}

\subsection{Central extensions of abelian groups}
According to \cite{BT}, over $k=\bar{\bF}_p$, the quotient spaces $V/G$
are {\em universal} for {\em unramified cohomology}: given a variety $X/k$ and an unramified class $\alpha\in \rH^i_{nr}(k(X))$ (Galois cohomology with torsion coeffients, coprime to $p$), there exists a birational map $X\to V/G$, where $V$ is a faithful representation of a central extension $G$ of an abelian group, such that $\alpha$ is induced from $V/G$. There is a general algorithm to compute the class, in $\Burn_n(G)$, of $G$-actions on $n$-dimensional linear representations $V$ of $G$, based on De Concini--Procesi models of subspace arrangements \cite{KT-vector}. This motivates the study of $\BC_*(G)$ for groups of such type.

As a first example, let $G=\mathfrak{D}_p$ be the dihedral group of order $2p$, with $p\geq 5$ is a prime. Computer experiments suggest that
$$
\cBC_2(G)=\cB_2([\fC_p, \fC_p])= \mathbb{Z}^{\frac{(p-5)(p-7)}{24}}\times(\mathbb{Z}/2)^{\frac{p-3}{2}}\times\mathbb{Z}/\tfrac{p^2-1}{12}.
$$
The conjugation action on $\beta=(b_1,b_2)$ for symbols in $\cB_2([\fC_p, \fC_p])$ 
is equivalent to
$$
(\fC_p, \fC_p, (b_1,b_2))=(\fC_p, \fC_p, (-b_1, -b_2)).
$$
This leads to a variant of the group 
$\cB_2^-(\fC_p)$ introduced in \cite{KPT}. In fact,
$$
\cB_2^-(\fC_p)\otimes\bQ\simeq\cB_2([\fC_p, \fC_p])\otimes\bQ,
$$
since according to \cite[Proposition 3.2]{HKTsmall}, we have
$$
(\fC_p, \fC_p, (a,b))+(\fC_p, \fC_p, ( -a, b))=0\in\cB_2([\fC_p, \fC_p])\otimes\bQ.
$$
The rank of the torsion-free part of $\cB_2([\fC_p, \fC_p])$ is thus related to the modular curve $X_1(p)$ (see \cite[Section 11]{KPT}).

We may also consider central extensions
\begin{align*}
    0 \rightarrow \mathbb{Z}/p \rightarrow G \rightarrow (\mathbb{Z}/p)^2 \rightarrow 0
\end{align*}
with $Z(G)\cong \mathbb{Z}/p$, and $p$ a prime.
For example, we have 
\begin{itemize}
    \item $p=2$, $G=\mathfrak{D}_4$, $\cBC_2(G)=(\mathbb{Z}/2)^3$.
    \item $p=3$, $G=\mathfrak{He}_3$, $\cBC_2(G)=\mathbb{Z}^{26}$, $\cBC_3(G)=\mathbb{Z}^4$.
    \item $p=5$, $G=\mathfrak{He}_5$, $\cBC_2(G)=\mathbb{Z}^{124}$, $\cBC_3(G)=(\mathbb{Z}/2)^{36}\times \mathbb{Z}^{36}$.
\end{itemize}
According to the structure of the Heisenberg group $\mathfrak{He}_p$, for odd primes $p$, we have 
\begin{align*}
    \cBC_n(\mathfrak{He}_p)=\cB_n([\mathbb{Z}/p,\bZ/p])^{3p+5}\oplus 
    \cB_n([(\mathbb{Z}/p)^2,(\mathbb{Z}/p)^2])^{p+1}.
\end{align*}

\subsection{Symmetric groups}
We compute the combinatorial Burnside groups for small symmetric groups 
$G=\fS_n$:

{\tiny
$$
\begin{tabular}{c|l|l}
$n$  & $\cBC_2(G)$& $\cBC_3(G)$  \\ 
\hline 
$3$ & $\bZ/2$ & 0 \\
\hline
$4$ & $(\bZ/2)^3$ & 0 \\
\hline
$5$ & $(\bZ/2)^6 \times \bZ/4$ & 0 \\
\hline
$6$ & $(\bZ/2)^{31}\times(\bZ/4)^3\times \bZ/8$ & $(\bZ/2)^5\times \bZ/4$ \\
\hline
$7$ & $(\bZ/2)^{57}\times (\bZ/4)^{12}\times (\bZ/8)^2 \times \bZ/3$ & $(\bZ/2)^{16} \times \bZ/4$ \\
\hline
$8$ & $(\bZ/2)^{290}\times(\bZ/4)^{30} \times(\bZ/8)^6\times\bZ/{16}\times (\bZ/{3})^2\times \bZ$ & $(\bZ/2)^{122}\times(\bZ/4)^4 \times \bZ/8 \times \bZ$ \\
\end{tabular}
$$
}

For example, for $G=\mathfrak{S}_4$, the only conjugacy classes $[H,Y]$ that contribute to $\BC_2(G)$ are (the conjugacy classes of) the pairs: 
\begin{enumerate}
    \item $(\fC_3, \fC_3)$, with $\fC_3=\langle (2,4,3) \rangle$,
    \item $(\fK_4,\fK_4)$, with $\fK_4=\langle (3,4),(1,2)(3,4)\rangle$,
    \item $(\fC_4,\fC_4)$, with $\mathfrak{C}_4=\langle (1,4,2,3)\rangle$.
\end{enumerate}
We have 
$$
\cB_2([H,Y]) = \bZ/2
$$ 
for the corresponding summands  of $\BC_2'(G)$.

\subsection{Nonabelian subgroups of the plane Cremona group}
\label{sect:further}

Here, we compute $\BC_*(G)$ for groups admitting {\em primitive} actions on $\bP^2$, namely: 
$$
\mathfrak A_5, \mathsf{ASL}_2(\bF_3), \mathsf{PSL}_2(\bF_7), \mathfrak A_6.
$$

\

\begin{itemize}
    \item $G=\mathfrak{A}_5=\langle (1,2,3)(3,4,5)\rangle \subset \mathfrak{S}_5:$ Nontrivial terms arise from 
    \begin{itemize}
        \item
        $(\fC_3,\fC_3)$, with $\mathfrak{C}_3=\langle (1,2,5) \rangle$,
        \item
        $(\fC_5,\fC_5)$, with $\mathfrak{C}_5=\langle(1,4,5,3,2)\rangle$, 
    \end{itemize}
which contribute
    \begin{itemize}
        \item $\cB_2([(\mathfrak{C}_3,\fC_3)])=\mathbb{Z}/2$,
        \item $\cB_2([(\mathfrak{C}_5,\fC_5)])=(\mathbb{Z}/2)^2$.
    \end{itemize} 
    We have
$$
\cBC_2(G)=(\mathbb{Z}/2)^3, \quad \text{ and } \quad \cBC_n(G)=0, \quad n\ge 3.
$$
\item $G=\mathfrak{C}_3^2:\mathsf{SL}_2(\mathbb{F}_3)=\mathsf{ASL}(2,3)\subset 
\fS_9$, generated by
    \begin{align*}
        \langle (2,5,8)(3,9,6),(2,4,3,7)(5,6,9,8),
        (1,2,3)(4,5,6)(7,8,9)\rangle.
    \end{align*}
We have
$$
\cBC_2(G)=(\mathbb{Z}/2)^7\times \mathbb{Z}^{13}, \quad 
\cBC_3(G)=\mathbb{Z}/2 \times \mathbb{Z}, \quad
\cBC_n(G)=0, n\ge 4.
$$
    \item $G=\mathsf{PSL}(2,7)= \langle (3,6,7)(4,5,8),(1,8,2)(4,5,6)\rangle \subset \mathfrak{S}_8$: 
Nontrivial terms arise from
    \begin{itemize}
        \item 
        $(\mathfrak{C}_3,\fC_3)$, with $\mathfrak{C}_3=\langle(2,6,5)(3,7,4)\rangle$, 
        \item
        $(\mathfrak{C}_7,\fC_7)$, with $\mathfrak{C}_7=\langle (1,2,5,3,6,7,4)\rangle$,
        \item 
        $(\mathfrak{C}_4,\fC_4)$, with $\mathfrak{C}_4=\langle (1,3,4,8)(2,7,6,5)\rangle$,
    \end{itemize}
which contribute 
    \begin{itemize}
        \item $\cB_2([(\mathfrak{C}_3,\fC_3)])=\mathbb{Z}/2$,
        \item $\cB_2([(\mathfrak{C}_7,\fC_7)])=\mathbb{Z}/2 \times \mathbb{Z}$,
        \item $\cB_2([(\mathfrak{C}_4,\fC_4)])=\mathbb{Z}/2$.
    \end{itemize}
We have
$$
\cBC_2(G)=(\mathbb{Z}/2)^3\times \mathbb{Z}, \quad 
\cBC_3(G)=\mathbb{Z}/2, \quad \cBC_n(G)=0, n\ge 4. 
$$
    \item $G=\mathfrak{A}_6 = \langle (1,2)(3,4,5,6),(1,2,3)\rangle$:
We have    
$$    
\cBC_2(G)=(\mathbb{Z}/2)^7\times \mathbb{Z}/4\times \mathbb{Z}, \quad
\cBC_3(G)=\mathbb{Z}/2 \times \mathbb{Z}, \quad \cBC_n(G)=0, n\ge 4.
$$
\end{itemize}

\subsection{A geometric application}
Consider 
$$
G=\mathfrak{C}_2\times \mathfrak{S}_3=\mathfrak{D}_6 = 
\langle (1,2,3,4,5,6),(1,6)(2,5)(3,4)\rangle \subset \mathfrak{S}_6.
$$
    It is known that the linear action of $G$ on $\bP^2$ and the toric action of $G$ on the del Pezzo surface $X$ of degree 6 are not equivariantly birational \cite{isk-s3}. The proof in \cite{isk-s3} relies on tools of the equivariant Minimal Model Program for surfaces, in particular, on the classification of Sarkisov links. 
    
    In \cite[Section 7.6]{HKTsmall}, we used the Burnside group $\Burn_2(G)$ to distinguish these actions. Here, we rework this example in the framework of combinatorial Burnside groups (see also \cite[Section 6]{Burntoric}).

    We have
    $$
    \cBC_2(G)=(\mathbb{Z}/2)^5\times \mathbb{Z}/4,
    $$
    with decomposition
    \begin{itemize}
        \item 
        $H_1=\mathfrak{C}_3=\langle (1,3,5)(2,4,6)\rangle$,
        \item 
        $H_2=\mathfrak{C}_2^2=\langle (2,6)(3,5),(1,4)(2,5)(3,6)\rangle$,
        \item 
        $H_3=\mathfrak{C}_6=\langle (1,2,3,4,5,6)\rangle$.
    \end{itemize}
    Nontrivial contributions to $\BC_2'(G)$ arise from
    \begin{itemize}
        \item $\cB_2([(H_1,H_1)])=\mathbb{Z}/2$,
        \item $\cB_2([(H_2,H_2)])=(\mathbb{Z}/2)^2$,
        \item $\cB_2([(H_1,H_3)])=\mathbb{Z}/2$,
        \item $\cB_2([(H_3,H_3)])=\mathbb{Z}/2\times \mathbb{Z}/4$.
    \end{itemize}

    By \cite[Proposition 6.1]{Burntoric}, 
    we have a formula for the difference 
    $$
    [X\mathrel{\righttoleftarrow}G]-[\mathbb{P}^2\mathrel{\righttoleftarrow}G] \in \rm{Burn}_2(G), 
    $$
    where $\mathbb{P}^2=\bP(1\oplus V_{\chi})$, and $V_{\chi}$ is the standard 
    2-dimensional representation of $\fS_3$, twisted by the character of $\mathfrak C_2$.
    Applying the homomorphism 
    \begin{align*}
    \rm{Burn}_2(G) &\to \cBC_2(G), 
    \end{align*}
 defined in \cite[Proposition 8]{KT-struct}, we obtain the class
    \begin{align*}
    & (diagonal\ in\ \fC_2 \times \mathfrak{S}_2,\fC_2 \times \mathfrak{S}_2,(1)) \\
    +& (\fC_2,\fC_2\times\mathfrak{S}_2,(1))+(\fC_3,\fC_3,(1,1))\\
    -&(\fC_2,\fC_2\times \mathfrak{S}_3,(1))-(\fC_2\times \fC_3,\fC_2\times \fC_3,((0,1),(1,2)))\in \BC_2(G).
    \end{align*} 
    Its image under the map $\Psi$ equals 
 \begin{align*}
   (\fC_3,\fC_3,(1,1))
    -(\fC_3,\fC_2\times \fC_3,(1,2))\in \BC'_2(G), 
    \end{align*}
a nontrivial $2$-torsion class.     
On the other hand, $\cBC_3(G)=0$; in particular, we cannot distinguish the classes of $X\times \bP^1$ and $\bP^2\times \bP^1$, with trivial action on the $\bP^1$-factor. This problem was raised in \cite[Remark 9.13]{lemire}.

\bibliographystyle{plain}
\bibliography{bcn}

\begin{thebibliography}{10}

\bibitem{blancthesis}
J.~Blanc.
\newblock {\em Finite abelian subgroups of the {C}remona group of the plane}.
\newblock PhD thesis, Universit\'e de Gen\`eve, 2006.
\newblock Th\`ese no. 3777, {\tt arXiv:0610368}.

\bibitem{BT}
F.~Bogomolov and Yu. Tschinkel.
\newblock Universal spaces for unramified {G}alois cohomology.
\newblock In {\em Brauer groups and obstruction problems}, volume 320 of {\em
  Progr. Math.}, pages 57--86. Birkh\"{a}user/Springer, Cham, 2017.

\bibitem{MaxAbelSn}
J.~M. Burns and B.~Goldsmith.
\newblock Maximal order abelian subgroups of symmetric groups.
\newblock {\em Bull. London Math. Soc.}, 21(1):70--72, 1989.

\bibitem{CS}
I.~Cheltsov and C.~Shramov.
\newblock {\em Cremona groups and the icosahedron}.
\newblock Monographs and Research Notes in Mathematics. CRC Press, Boca Raton,
  FL, 2016.

\bibitem{HKTsmall}
B.~Hassett, A.~Kresch, and Yu. Tschinkel.
\newblock Symbols and equivariant birational geometry in small dimensions,
  2020.
\newblock {\tt arXiv:2010.08902}.

\bibitem{isk-s3}
V.~A. Iskovskikh.
\newblock Two non-conjugate embeddings of {$S_3\times Z_2$} into the {C}remona
  group. {II}.
\newblock In {\em Algebraic geometry in {E}ast {A}sia---{H}anoi 2005},
  volume~50 of {\em Adv. Stud. Pure Math.}, pages 251--267. Math. Soc. Japan,
  Tokyo, 2008.

\bibitem{KPT}
M.~Kontsevich, V.~Pestun, and Yu. Tschinkel.
\newblock Equivariant birational geometry and modular symbols, 2019.
\newblock {\tt arXiv:1902.09894}, to appear in J. Eur. Math. Soc.

\bibitem{BnG}
A.~Kresch and Yu. Tschinkel.
\newblock Equivariant birational types and {B}urnside volume, 2020.
\newblock {\tt arXiv:2007.12538}.

\bibitem{KT-arith}
A.~Kresch and Yu. Tschinkel.
\newblock Arithmetic properties of equivariant birational types.
\newblock {\em Res. Number Theory}, 7(2):Paper No. 27, 10, 2021.

\bibitem{KT-vector}
A.~Kresch and Yu. Tschinkel.
\newblock Equivariant {B}urnside groups and representation theory, 2021.
\newblock {\tt arXiv:2108.00518}.

\bibitem{Burntoric}
A.~Kresch and Yu. Tschinkel.
\newblock Equivariant {B}urnside groups and toric varieties, 2021.
\newblock {\tt arXiv:2112.05123}.

\bibitem{KT-struct}
A.~Kresch and Yu. Tschinkel.
\newblock Equivariant {B}urnside groups: structure and operations, 2021.
\newblock {\tt arXiv:2105.02929}.

\bibitem{lemire}
N.~Lemire, V.~L. Popov, and Z.~Reichstein.
\newblock Cayley groups.
\newblock {\em J. Amer. Math. Soc.}, 19(4):921--967, 2006.

\bibitem{Pro-ICM}
Yu. Prokhorov.
\newblock Finite groups of birational transformations, 2021.
\newblock {\tt arXiv:2108.13325}.

\bibitem{Rota}
G.-C. Rota.
\newblock On the foundations of combinatorial theory. {I}. {T}heory of
  {M}\"{o}bius functions.
\newblock {\em Z. Wahrscheinlichkeitstheorie und Verw. Gebiete}, 2:340--368
  (1964), 1964.

\bibitem{serre}
J.-P. Serre.
\newblock Le groupe de {C}remona et ses sous-groupes finis.
\newblock Number 332, pages Exp. No. 1000, vii, 75--100. 2010.
\newblock S\'{e}minaire Bourbaki. Volume 2008/2009. Expos\'{e}s 997--1011.

\bibitem{Weisner}
L.~Weisner.
\newblock Abstract theory of inversion of finite series.
\newblock {\em Trans. Amer. Math. Soc.}, 38(3):474--484, 1935.

\end{thebibliography}
\end{document}